\def\NAT@def@citea{\def\@citea{\NAT@separator}}
\theoremstyle{plain}
\newtheorem{theorem}{Theorem}[section]
\newtheorem{lemma}[theorem]{Lemma}
\theoremstyle{definition}
\theoremstyle{remark}
\def\Sm{\mathcal{A}_\Theta}
\def\D{\mathcal{D}(\mathcal{I})}
\begin{document}

\articletype{}

\title{A new closed-form expression for the solution of ODEs in a ring of distributions and its connection with the matrix algebra}

\author{
\name{S. Pozza\textsuperscript{a}\thanks{CONTACT S. Pozza. Email: pozza@karlin.mff.cuni.cz}}
\affil{\textsuperscript{a}Charles University, Sokolovská 83 186, 75 Prague 8, Czech Republic}
}

\maketitle

\begin{abstract}
A new expression for solving homogeneous linear ODEs based on a generalization of the Volterra composition was recently introduced. 
In this work, we extend such an expression, showing that it corresponds to inverting an infinite matrix. This is done by studying a particular subring and connecting it with a subalgebra of infinite matrices.
\end{abstract}

\begin{keywords}
Ordinary differential equations; Volterra composition; Legendre polynomials
\end{keywords}

\section{Introduction}
Let $\tilde{A}(t)$ be an $N\times N$ matrix-valued function analytic over $t\in \mathcal{I}=[0,1]$ and $I_N$ the $N \times N$ identity matrix. Then, the system of ODEs
\begin{equation}\label{eq:ode:intro:hom}
    \frac{d}{dt}U_s(t) = \tilde{A}(t) U_s(t), \quad U_s(s)=I_N, \quad \text{ for } t \geq s, \quad t,s\in \mathcal{I},
\end{equation}
has a unique solution $U_s(t)$. 
When $\tilde{A}(\tau_1)\tilde{A}(\tau_2)=\tilde{A}(\tau_2)\tilde{A}(\tau_1)$ for every $\tau_1,\tau_2 \in \mathcal{I}$, $U_s(t)$ takes the form
$$U_s(t)=\exp\left(\int_s^{t} \tilde{A}(\tau)\, \text{d}\tau\right).$$ 
In general, however, $U_s(t)$ has no known simple expression in terms of $\tilde{A}(t)$.
Indeed, despite systems of non-autonomous linear ODEs are crucial, common problems that appear in a variety of contexts \cite{Autler1955,BenEtAll17,Blanes15,kwaSiv72,Lauder1986,Shirley1965},
their solution is surprisingly difficult to formulate by an analytic expression.

When $\tilde{A}(t)$ is a scalar function, \cite{PozVan22proc_scalar} shows that a closed form of the solution exists in the non-commutative ring $\mathcal{S}$ composed of a certain distribution set $\D$ \cite{schwartz1978}, the so-called $\star$-product \cite{ProceedingsPaper2020}, and the usual addition. The $\star$-product is a convolution-like operation that generalizes the \emph{Volterra composition} (e.g., \cite{Volterra1928}). The closed form is given in terms of a $\star$-product inverse in the ring. Moreover, it is easy to define a $\mathcal{S}$-module of matrices with a bilinear product that generalizes the results to the case of a matrix-valued $\tilde{A}(t)$; see \cite{Giscard2015,GiscardPozza2021}. In a few words, in this case, the solution $U_s(t)$ is given by the bilinear product inverse of a matrix in the $\mathcal{S}$-module. This means that in the framework of the $\mathcal{S}$ ring, it is possible to express $U_s(t)$ in a closed form for every matrix-valued analytic function $\tilde{A}(t,s)$. This new expression has led to several new symbolic and numerical approaches to the solution of \eqref{eq:ode:intro:hom} \cite{Giscard2015,BonGis20,ProceedingsPaper2020,GiscardPozza2021,GiscardPozza2022,PozVan22proc_mtx,PozVan22proc_PANM,PozVan22proc_scalar}.

In the pieces of literature mentioned above, the new expression for the solution of \eqref{eq:ode:intro:hom} has not been derived in the ring $\mathcal{S}$, but in alternative equivalent ways. This paper aims to show the potentiality of working in the $\mathcal{S}$-ring module. We do that by deriving a new result, namely, the expression for the solution of the non-homogeneous system of linear ODEs
\begin{equation}\label{eq:ode:intro} 
\tilde{A}(t) U_s(t)=\frac{d}{dt}U_s(t) + \tilde{B}(t), \quad U_s(s)=I_N, \quad \text{ for } t \geq s, \quad t,s\in \mathcal{I},
\end{equation}
where $\tilde{B}(t)$ is an $N\times N$ matrix-valued analytic function over $\mathcal{I}$. 
Moreover, we will show that there is a subring of $\mathcal{S}$ that corresponds to a subalgebra of infinite matrices, and we will prove the existence of certain matrix inverses in the subalgebra using the connection with $\mathcal{S}$.

In Section \ref{sec:starexpression}, we define the $\star$-product and the related algebraic structures, and we derive the new expression of the solution of \eqref{eq:ode:intro}. Section \ref{sec:starsol} shows the connection between the $\mathcal{S}$ subring and a subalgebra of infinite matrices. As a consequence, the ODE solution can be obtained by solving a linear system in the subalgebra. 
Section \ref{sec:conclusion} concludes the presentation.

\section{A $\star$-product solution to non-homogeneous ODEs}\label{sec:starexpression}
Let $\tilde{f}_1(t,s), \tilde{f}_2(t,s)$ be two bivariate functions and assume that they are analytic\footnote{Note that in the previously appeared works, we have usually assumed the functions to be smooth. Here we restrict the assumption to analytic for the sake of simplicity.}, in both $t$ and $s$, over $\mathcal{I}=[0, 1]$; we denote such a set of functions by $\mathcal{A}(\mathcal{I})$.
The \emph{Volterra composition} of $\tilde{f}_1, \tilde{f}_2$, introduced by Vito Volterra (e.g., \cite{Volterra1928}), is defined as
\begin{equation*}
  \big(\tilde{f}_2 \star_v \tilde{f}_1\big)(t,s) := \int_s^{t} \tilde{f}_2(t,\tau) \tilde{f}_1(\tau, s) \, \text{d}\tau, \quad t,s \in \mathcal{I}.
\end{equation*}
Note that, from now on, a function marked with a tilde will stand for a function from $\mathcal{A}(\mathcal{I})$.
If we look at it as a product, the Volterra composition lacks important features. For instance, the identity. This is why the Volterra composition has been extended to the so-called $\star$-product \cite{ProceedingsPaper2020}.
Let $\Theta(t-s)$ be the Heaviside theta function, i.e.,
\begin{equation*}
    \Theta(t-s) = \begin{cases}
                        1, \quad t \geq s \\
                        0, \quad t < s 
                \end{cases}.
\end{equation*}
Moreover, let $\delta(\cdot)=\delta^{(0)}(\cdot)$ be the Dirac delta distribution and $\delta^{(i)}(\cdot)$ be its $i$th derivatives.
We denote with $\D$ the class of the distributions $d$ that can be expressed as
\begin{equation*}
d(t,s)=\widetilde{d}(t,s)\Theta(t-s) + \sum_{i=0}^k \widetilde{d}_i(t,s)\delta^{(i)}(t-s),  
\end{equation*}
with $\tilde{d}, \tilde{d}_i \in \mathcal{A}(\mathcal{I})$.
The $\star$-product $ \star: \D \times \D \rightarrow \D $ is defined as
\begin{equation}\label{eq:def:star}
  \big(f_2 \star f_1\big)(t,s) := \int_\mathcal{I} f_2(t,\tau) f_1(\tau, s) \, \text{d}\tau, \quad f_1, f_2 \in \D,
\end{equation}
Consider the subclass $\Sm(\mathcal{I}) \subset \D$ comprising those distributions of the form
\begin{equation*}
f(t,s)=\widetilde{f}(t,s)\Theta(t-s).
\end{equation*}
Then, the $\star$-product of $f_1,f_2 \in \Sm(\mathcal{I})$ is equivalent to the Volterra composition
\begin{align*}
  \big(f_2 \star f_1\big)(t,s) &= \int_{\mathcal{I}} \widetilde{f}_2(t,\tau) \widetilde{f}_1(\tau, s)\Theta(t-\tau)\Theta(\tau-s) \, \text{d}\tau,\\ &=\Theta(t-s)\int_s^{t} \widetilde{f}_2(t,\tau) \widetilde{f}_1(\tau, s) \, \text{d}\tau = \Theta(t-s)( \tilde{f}_2 \star_v \tilde{f}_1)(t,s).
\end{align*}
The $\star$-product  is well-defined and closed in $\D$; we refer the reader to \cite{GiscardPozza2021,ProceedingsPaper2020} for further details. For the goals of this paper, it will be enough to recall the following properties. Given $f \in \Sm(\mathcal{I})$, then
\begin{align}
  \left(\delta'(t-s) \ast f \right) (t,s)  &= \left(\partial_t \tilde{f}(t,s)\right)\Theta(t-s) + \tilde{f}(s,s)\delta(t-s); \label{eq:diracder} \\ 
    \left(f\ast\delta'(t-s) \right) (t,s) &= -\left(\partial_s \tilde{f}(t,s)\right)\Theta(t-s) + \tilde{f}(t,t)\delta(t-s); \nonumber
\end{align}
see \cite{schwartz1978,ProceedingsPaper2020}. As a consequence,
\begin{equation}\label{eq:dira1:inverse}
    \Theta(t-s) \star \delta'(t-s) = \delta'(t-s) \star \Theta(t-s) = \delta(t-s),
\end{equation}
i.e., $\delta'$ is the $\star$-inverse of $\Theta$.
Moreover,
\begin{itemize}
    \item $\D$ is closed under $\star$-multiplication;
    \item the $\star$-product is associative over $\D$;
    \item the Dirac delta distribution $1_\star(t,s):=\delta(t-s)$ is the identity of the $\star$-product.
\end{itemize}
Therefore, $\mathcal{S}(\mathcal{I}):=(\D, \star, +, 0, 1_\star)$ is a non-commutative ring.
The $\star$-product can also be extended to matrices and vectors composed of elements from $\D$. This is easily done by replacing the standard multiplication appearing in the integrand of \eqref{eq:def:star} with the usual matrix-matrix multiplication \cite{GiscardPozza2022}. Similarly, we can define the right (and left) scalar-matrix multiplication. As a result, we obtain the module of the matrices with elements from $\D$ with, as bilinear product, the $\star$-product between matrices, and, as scalar product, the $\star$-product between a scalar and a matrix.

\bigskip

The system of ODEs in \eqref{eq:ode:intro} can be rewritten in the form
\begin{equation}\label{eq:ode:star}
    \partial_t U(t,s) = \tilde{A}(t) \Theta(t-s) U(t,s) + \tilde{B}(t,s)\Theta(t-s), \quad U(s,s) = I_N, \quad t,s \in \mathcal{I},
\end{equation}
with $U_s(t) = U(t,s)$. 
Note that the matrices $U(t,s) = \tilde{U}(t,s)\Theta(t-s)$, $A(t,s) := \tilde{A}(t) \Theta(t-s)$, and $B(t,s):= \tilde{B}(t,s) \Theta(t-s)$ are all composed of elements from $\Sm(\mathcal{I})$.
Therefore, by exploiting formula \eqref{eq:diracder} and \eqref{eq:dira1:inverse}, equation~\eqref{eq:ode:star} becomes:
\begin{equation}\label{eq:ode:star2}
    \delta'(t-s) \star U(t,s) = \tilde{A}(t)U(t,s) + I_\star(t,s) + B(t,s),
\end{equation}
where $I_\star(t,s)$ is the identity matrix $I_N$ multiplied by $\delta(t-s)= 1_\star(t,s)$.
Once the problem has been rewritten into the $\star$-framework, we can derive a formula for its solution by working in the $\mathcal{S}(\mathcal{I})$-module. If we $\star$-multiplying \eqref{eq:ode:star2} from the left by $\Theta(t-s)$ we obtain
\begin{equation}\label{eq:star:u:iter}
    U(t,s) = \Theta(t-s) \star \left( \tilde{A}(t)U(t,s) + I_\star(t,s) + B(t,s) \right).
\end{equation}
Now, by replacing $U(t,s)$ in the right-hand side of \eqref{eq:star:u:iter} with the right-hand side of \eqref{eq:star:u:iter} itself, we get the following iterations (we drop the dependency from $t,s$ for the sake of readability)
\begin{align*}
      U &= \Theta \star \left( \tilde{A}U + I_\star + B \right), \\
      &= \Theta \star \left( \tilde{A}\left(\Theta \star \left( \tilde{A}U + I_\star + B \right)\right) + I_\star + B \right), \\
      &= \Theta \star \left( A \star \left( \tilde{A}U + I_\star + B \right) + I_\star + B \right), \\
      &= \Theta \star \left( A \star \tilde{A}U + \left( A + I_\star \right) \star ( I_\star + B ) \right).
\end{align*}
Note that the equality
\begin{equation*}
    \tilde{A}\left(\Theta \star \left( \tilde{A}U + I_\star + B \right)\right) = A \star \left( \tilde{A}U + I_\star + B \right)
\end{equation*}
holds since $\tilde{A}(t)$ does not depend on $s$.
Repeating the iterations $k$ times\footnote{In fact, such iterations are Picard iterations, see \cite[Section 2]{PozVan22proc_PANM}.},  we obtain
\begin{align}
      U &= \Theta \star \left( A \star A \star \tilde{A}U + \left( A \star A + A + I_\star \right) \star ( I_\star + B ) \right), \nonumber \\ 
       & \, \, \,\vdots \nonumber \\ 
      &= \Theta \star \left( A^{k\star} \star \tilde{A}U + \left( A^{k\star} + \dots + A + I_\star \right) \star ( I_\star + B ) \right), \label{eq:picard:it}
\end{align}
with $A^{k\star}$ the $k$th $\star$-power of $A$.
As shown in \cite{GiscardPozza2022}, 
\begin{equation*}
    \max_{t,s \in \mathcal{I}} \left\| \left(A^{k\star}\right)(t,s) \right\| \leq \left(\max_{t,s \in \mathcal{I}}\|A(t,s) \|\right)^k \frac{(t-s)^{k-1}}{(k-1)!}, \quad k \geq 1,
\end{equation*}
for any induced matrix norm.
Therefore \eqref{eq:picard:it} uniformly converges to the expression
\begin{equation}\label{eq:star:sol}
    U(t,s) = \Theta(t-s) \star R_\star(A)(t,s) \star \left( I_\star(t,s) + B(t,s) \right),
\end{equation}
where $R_\star(A)$ is the $\star$-resolvent of $A$, i.e.,
\begin{equation*}
    R_\star(A) = I_\star + \sum_{k=1}^\infty \left(A^{\star k}\right)(t,s).
\end{equation*}
Noticing that
 \begin{equation}\label{eq:res:inv}
     R_{\star}(A) \star (I_\star - A) = \left(I_\star + \sum_{k\geq 1} A^{\star k}\right) \star (I_\star - A) = I_\star,
 \end{equation}
 that is, $R_{\star}(A) = (I_\star - A)^{-\star}$ (the $\star$-inverse of $(I_\star - A)$), we get 
 \begin{equation}\label{eq:star:sol:closed}
    U(t,s) = \Theta(t-s) \star (I_\star - A)^{-\star}(t,s) \star \left( I_\star(t,s) + B(t,s) \right),
\end{equation}
that is a closed-form expression in the $\mathcal{S}$-module.

Finally, since the matrix $A$ is composed of elements from the subset 
$$\mathcal{A}_\Theta^t(\mathcal{I}) :=\left\{f \in \Sm(\mathcal{I}) : f(t,s) = \tilde{f}(t)\Theta(t-s) \right\} \subset \Sm(\mathcal{I}),$$ 
it is useful to define the set
$$ \mathcal{D}^t_0(\mathcal{I}) := \left\{f(t,s) = \alpha 1_\star + \sum_{i=1}^n \big(g_{i,1} \star \dots \star g_{i,m_n} \big), \; g_{i,j} \in \mathcal{A}_\Theta^t(\mathcal{I}) , \; \alpha \in \mathbb{C}\right\}. $$
and the related subring $(\mathcal{D}^t_0(\mathcal{I}), \star, +, 0, 1_\star)$.

\section{The $\star$-product and the matrix algebra}\label{sec:starsol}
 Let $\{p_k\}_k$ be a sequence of orthonormal shifted Legendre polynomials over the bounded interval $\mathcal{I}=[0,1]$, i.e., 
		\begin{align*}
			\int_{\mathcal{I}} p_k(\tau) p_\ell(\tau) d\tau = \delta_{k,\ell} = \begin{cases}
				0,\quad \text{if }k\neq \ell\\
				1,\quad \text{if } k=\ell				
			\end{cases}.
		\end{align*}
		Despite the fact that the functions $p_k$ are not in $\D$, with a small abuse of notation, we can still define the product
        $$ p_k(s) \star p_\ell(t) = \int_\mathcal{I} p_k(\tau) p_\ell(\tau) \; \textrm{d} \tau = \delta_{k,\ell}.  $$
		Given a function $f(t,s)= \tilde{f}(t,s) \Theta(t-s) \in \Sm(\mathcal{I})$, we can expand it into the series
        \begin{equation}\label{eq:f:exp}
			f(t,s) = \sum_{k=0}^\infty \sum_{\ell=0}^\infty f_{k,\ell} \, p_k(t) p_\ell(s), \quad t \neq s, \quad t,s \in \mathcal{I},
		\end{equation}
		with coefficients
		\begin{equation*}
		    f_{k,\ell} = \int_\mathcal{I} \int_\mathcal{I} f(\tau,\rho) p_k(\tau) p_\ell(\rho) \; \textrm{d} \rho \; \textrm{d} \tau;
		\end{equation*}
		see, e.g., \cite[p.~55]{LebSil72}.  
  The expansion can be rewritten in the matrix form
	\begin{align*}
	f(t,s) = \sum_{k=0}^{\infty} \sum_{\ell=0}^{\infty} f_{k,\ell} \, p_k(t) p_\ell(s) = \phi(t)^T F \, \phi(s)  \quad t \neq s, \quad t,s \in \mathcal{I}.
		\end{align*}
		where the \emph{coefficient matrix} $F$ and the vector $\phi(\tau)$ are defined as follows
		\begin{equation}\label{eq:coeff:mtx}
		          F := \begin{bmatrix}
				f_{0,0} & f_{0,1} & f_{0,2} & \dots \\
				f_{1,0} & f_{1,1} & f_{1,2} & \dots \\
    		      f_{2,0} & f_{2,1} & f_{2,2} & \dots \\
				\vdots & \vdots   & \vdots  &  \ddots
		\end{bmatrix}, 
		\quad  \phi_M(\tau) :=
		 \begin{bmatrix}
			p_0(\tau)\\
			p_1(\tau)\\
   			p_2(\tau) \\
			\vdots
		\end{bmatrix}.
		\end{equation}
  Note that each element of $F$ can be bounded by
\begin{equation}\label{eq:F:bound:const}
    |f_{k,\ell}| \leq \max_{t, s \in [0,1]} |\tilde{f}(t,s)| \sqrt{2k + 1} \sqrt{2\ell + 1}
\end{equation}

  In particular, an univariate function $\tilde{f}(t)$ can be expanded as 
	\begin{equation*}
		\tilde{f}(t) := \sum_{d=0}^\infty \alpha_d p_d(t),\quad \text{with } \alpha_d = \int_{-1}^1 \tilde{f}(t) p_d(t) dt.
	\end{equation*}
  Let $B^{(k)}$ be the coefficient matrix of $p_k(t) \Theta(t-s)$. Then, the coefficient matrix of $f(t,s) = \tilde{f}(t)\Theta(t-s)$ can be also expanded into the series
$$ F = \sum_{k=0}^\infty \alpha_k B^{(k)}.  $$
Note that each $B^{(k)}$ is a banded matrix with bandwidth $k+1$, \cite{PozVan22proc_scalar}.
Moreover, the Fourier coefficients $\{\alpha_k\}_{k\geq 0}$ decay geometrically \cite{TreApp13}. Indeed, there exist $0<\rho<1, C>0$ such that
	\begin{equation}\label{eq:coeffDecayRate}
		\vert a_k\vert \leq C \rho^{k}.
	\end{equation}
As a consequence, each element of $F$ can be bounded as follows
\begin{align}
        |f_{k,\ell}| &= \left| \sum_{j=|k-\ell|+2}^\infty \alpha_j B^{(j)}_{k,\ell} \right | \leq \sum_{j=|k-\ell|+2}^\infty |\alpha_j| | B^{(j)}_{k,\ell} | \nonumber \\
        &\leq C \max_{t, s \in [0,1]} |\tilde{f}(t,s)| \sum_{j=|k-\ell|+2}^\infty \rho^{j} \sqrt{2k + 1} \sqrt{2\ell + 1} \nonumber \\
                    &\leq K \rho^{|k-\ell|+2}, \label{eq:F:bound}
\end{align}
for some $K>0$. This means that $F$ is characterized by a geometric decay of the element magnitude as we move away from the diagonal.
  
  Consider $f(t,s) = \tilde{f}(t)\Theta(t-s),g(t,s) = \tilde{g}(t)\Theta(t-s) \in \Sm^t(\mathcal{I})$ and the related coefficient matrices \eqref{eq:coeff:mtx} $F,G$, respectively. 
  By orthogonality, $\phi(s) \star \phi(t)^T = I$, with $I$ the identity matrix. Therefore, for every $t \neq s$,
  		\begin{align*}
			(f\star g)(t,s) &= \left( \phi(t)^T F \, \phi(s) \right) \star \left( \phi(t)^T G \, \phi(s) \right), \\
                     &= \phi(t)^T F \left(\phi(s) \star \phi(t)^T \right) G \, \phi(s), \\
                    &= \phi(t)^T FG \, \phi(s).
		\end{align*}
  Thus, the coefficient matrix $H$ of the function $h = f \star g \in \mathcal{D}_0^t(\mathcal{I})$ is given by the matrix-matrix product $FG$.
  It is important to note that the product $FG$ is well-defined. Indeed, the series is convergent since by \eqref{eq:F:bound} there exist $K>0$ and $0< \rho < 1$ so that
  \begin{align*}
        \left|(FG)_{k,\ell} \right| &= \left|\sum_{j=1}^\infty F_{k,j} G_{j,\ell} \right| \leq  \sum_{j=1}^\infty \left| F_{k,j} \right| \left| G_{j,\ell} \right|  \\
                    &\leq \sum_{j=1}^\infty K_f K_g \rho_f^{|k-j|+2}  \rho_g^{|\ell-j|+2} \leq \sum_{j=1}^\infty K \rho^{|k-j| + |\ell-j| + 4}.
  \end{align*}
    Moreover, since $\min_{j=1,2\dots} |k-j| + |\ell-j| = |k-\ell|$, there exist $K_{fg}>0$ and $0<\rho_{fg}<1$ so that
   \begin{align}\label{eq:FG:bound}
        \left|(FG)_{k,\ell} \right| \leq K_{fg} \rho_{fg}^{|k-\ell|}.
  \end{align}
    This latter bound show that $FG$ is also characterized by a geometric decay of the element magnitude as we move away from the diagonal. Therefore, given $F, G, H$ coefficient matrices of $f(t,s) = \tilde{f}(t)\Theta(t-s), g(t,s) = \tilde{g}(t)\Theta(t-s), h(t,s) = \tilde{h}(t)\Theta(t-s)$, the matrix product $FGH$ is also well-defined and characterized by a off-diagonal geometric decay. 
    As a consequence, the set $\mathcal{F}$ of all the coefficient matrices of functions from $\mathcal{D}_0^t(\mathcal{I})$ is a subalgebra (with the usual sum, product, and matrix product) and it corresponds to the subring $(\mathcal{D}_0^t(\mathcal{I}), \star, +, 0, 1_\star)$.

\bigskip

  Consider now the $N \times N$ matrix-valued functions $A(t,s) = [f_{i,j}(t,s)]_{i,j=1}^N, B(t,s) = [g_{i,j}(t,s)]_{i,j=1}^N \in \mathbb{C}$ composed of elements from $\mathcal{D}_0^t(\mathcal{I})$. The functions $f_{i,j}$ and $g_{i,j}$ are associated with their coefficient matrices $F^{(i,j)}, G^{(i,j)}$, respectively. By extending the arguments presented above, we get the following expression for the (matrix) $\star$-product $C = A \star B = [h_{i,j}]_{i,j=1}^N$,
  \begin{equation*}
      \left(A \star B \right)_{k,\ell}(t,s) = \sum_{j=1}^N (f_{k,j} \star g_{j,\ell})(t,s) =  \sum_{j=1}^N \phi(t)^T F^{(k,j)} G^{(j,\ell)} \, \phi(s), \quad t\neq s.
  \end{equation*}
  Therefore, the coefficient matrices $H^{(k,\ell)}$ of the functions $h_{k,\ell}(t,s)$ are given by
  \begin{equation} \label{eq:block:prod}
        H^{(k,\ell)} = \sum_{j=1}^N F^{(k,j)} G^{(j,\ell)}.
  \end{equation}
  Defining the block matrices $\mathbf{A} = [F_{i,j}]_{i,j=1}^N$, $\mathbf{B} = [G_{i,j}]_{i,j=1}^N$, $\mathbf{C} = [H_{i,j}]_{i,j=1}^N$, we obtain the relation: 
  \begin{equation}\label{eq:block:mtx:prod}
      \mathbf{C} = \mathbf{A} \mathbf{B}.
  \end{equation}
  Note that, despite the blocks having an infinite size, the product in \eqref{eq:block:mtx:prod} is well-defined since the matrix products in \eqref{eq:block:prod} are well-defined.

In Section~\ref{sec:starexpression}, we have seen the crucial role played by the $\star$-resolvent $R_\star(A)$.
Let $C := \sum_{k\geq 1} A^{\star k} = [h_{i,j}]_{i,j=1}^N$, with $ h_{i,j} \in \mathcal{A}_\Theta$. Therefore, using the notation above, 
		\begin{align*}
			\left(R_{\star}(A)\right)_{k,\ell}(t,s) &= \phi(t)^T \left(I + H^{(k,\ell)} \right) \phi(s).
		\end{align*}
  Hence, for $t \neq s$, we obtain
		\begin{align}\label{eq:reso:mtx:1}
			\left(R_{\star}(A)\star (I_\star - A) \right)_{k,\ell} &= 
            \sum_{j=1}^N \phi(t)^T 
            \left(I + H^{(k,j)} \right) 
            \phi(s)
                \star
            \phi(t)^T 
            \left(I - F^{(j,\ell)} \right) 
            \phi(s), \\ \label{eq:reso:mtx:2}
            &= 
            \phi(t)^T 
            \sum_{j=1}^N 
            \left(I + H^{(k,j)} \right) 
            \left(I - F^{(j,\ell)} \right) 
            \phi(s) .
		\end{align}
  Then, relation \eqref{eq:res:inv} implies
		\begin{align}\label{eq:reso:mtx:3}
                        \phi(t)^T 
            \sum_{j=1}^N 
            \left(I + H^{(k,j)} \right) 
            \left(I - F^{(j,\ell)} \right) 
            \phi(s)  
            = 1_\star \delta_{k,\ell} =  \phi(t)^T I\, \phi(s) \delta_{k,\ell}.
		\end{align}
In the following, we prove that $\sum_{j=1}^N 
            \left(I + H^{(k,j)} \right) 
            \left(I - F^{(j,\ell)} \right)  = I$.
    \begin{lemma}\label{lemma:dirac:ident}
    Let $D$ be an infinite matrix so that $\phi(t)^T D \phi(s) = \delta(t-s)$, then $D = I$.    
    \end{lemma}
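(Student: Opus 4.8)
The plan is to recover the entries of $D$ one at a time by testing the hypothesized identity against the orthonormal basis. Writing the left-hand side as $\phi(t)^T D\,\phi(s) = \sum_{k,\ell\geq 0} D_{k,\ell}\, p_k(t)\, p_\ell(s)$, observe first that $D=I$ is indeed a solution: by completeness of $\{p_k\}_k$ in $L^2(\mathcal{I})$ one has the resolution of the identity $\sum_{k\geq 0} p_k(t)\,p_k(s) = \delta(t-s)$, i.e. $\phi(t)^T I\,\phi(s) = \delta(t-s)$. Thus the content of the lemma is the \emph{uniqueness} of this solution, and the natural way to get it is to read off the coefficients of $\delta(t-s)$ in the product basis $\{p_k\otimes p_\ell\}_{k,\ell}$.

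Concretely, fix $m,n\geq 0$ and pair both sides of $\phi(t)^T D\,\phi(s) = \delta(t-s)$ with the test function $p_m(t)\,p_n(s)$ over $\mathcal{I}\times\mathcal{I}$. On the right-hand side the sifting property gives
\[
  \int_{\mathcal{I}}\!\int_{\mathcal{I}} \delta(t-s)\, p_m(t)\, p_n(s)\, \mathrm{d}t\, \mathrm{d}s = \int_{\mathcal{I}} p_m(s)\, p_n(s)\, \mathrm{d}s = \delta_{m,n},
\]
while on the left-hand side, interchanging the integral with the series and invoking orthonormality,
\[
  \int_{\mathcal{I}}\!\int_{\mathcal{I}} \Big(\sum_{k,\ell\geq 0} D_{k,\ell}\, p_k(t)\, p_\ell(s)\Big) p_m(t)\, p_n(s)\, \mathrm{d}t\, \mathrm{d}s = \sum_{k,\ell\geq 0} D_{k,\ell}\, \delta_{k,m}\, \delta_{\ell,n} = D_{m,n}.
\]
Comparing the two identities yields $D_{m,n} = \delta_{m,n}$ for all $m,n\geq 0$, that is, $D = I$.

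The only subtle point — and the main obstacle — is justifying the interchange of the double integral with the double series on the left, which amounts to pinning down in which sense the identity $\phi(t)^T D\,\phi(s) = \delta(t-s)$ is meant. I would take it as an equality in $\mathcal{D}'(\mathcal{I}\times\mathcal{I})$, the series converging there; then pairing with the smooth test function $p_m(t)\,p_n(s)$ commutes with the series by the very definition of distributional convergence, and the computation above is legitimate verbatim. Alternatively, in the setting where the lemma is actually applied — $D = \sum_{j=1}^N (I + H^{(k,j)})(I - F^{(j,\ell)})$, whose entries inherit the off-diagonal geometric decay of \eqref{eq:FG:bound} — one has $\sum_{k,\ell} |D_{k,\ell}|\,\|p_k\|_\infty\|p_\ell\|_\infty < \infty$, since the geometric decay beats the $\sqrt{2k+1}\sqrt{2\ell+1}$ growth of the sup-norms (cf. \eqref{eq:F:bound:const}), so Fubini applies directly. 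Once this interchange is granted the argument is immediate; everything else is just orthonormality and the sifting property.
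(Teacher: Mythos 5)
Your argument is essentially the paper's: the paper also recovers the entries of $D$ by pairing the weakly convergent series with Legendre polynomials and using orthonormality, the only cosmetic difference being that it tests in the variable $t$ alone (taking $\tilde f = p_j$) and then invokes uniqueness of the Legendre expansion in $s$, whereas you test in both variables at once; your primary justification of the interchange — reading the identity distributionally, so that pairing with the test function commutes with the series — is precisely the paper's ``weak sense'' interpretation. One caution about your alternative Fubini justification: the claimed absolute summability $\sum_{k,\ell}|D_{k,\ell}|\,\|p_k\|_\infty\|p_\ell\|_\infty<\infty$ does not hold in the setting where the lemma is applied, since the entries of $D$ are only known a priori to obey bounds of the type $K\sqrt{2k+1}\,\sqrt{2\ell+1}$ (and even a geometric off-diagonal decay $\rho^{|k-\ell|}$ would not save it, as $\sum_{k,\ell}\rho^{|k-\ell|}\sqrt{2k+1}\,\sqrt{2\ell+1}$ diverges band by band), so the distributional reading is the one to rely on.
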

\begin{proof}
        First of all, since $\delta(t-s)$ is a generalized function, the convergence of the series $\phi(t)^T D \phi(s)$ is intended in a weak sense. This means that, for every $\tilde{f}(t)$ analytic on $\mathcal{I}$,
        \begin{equation*}
            \lim_{N\rightarrow \infty}\int_{\mathcal{I}} \sum_{k,\ell=1}^N d_{k,\ell} p_k(\tau) p_\ell(s) \tilde{f}(\tau) d\tau = \tilde{f}(s) = \int_{\mathcal{I}} \delta(\tau - s)\tilde{f}(\tau) d\tau.
        \end{equation*}
        Setting $\Tilde{f}(t) = p_j(t)$ gives
        \begin{align*}
         p_j(s) &= \lim_{N\rightarrow \infty} \sum_{k,\ell=1}^N d_{k,\ell} \int_{\mathcal{I}} p_k(\tau) p_\ell(s) p_j(\tau) d\tau \\
         &= \lim_{N\rightarrow \infty} \sum_{k,\ell=1}^N d_{k,\ell} p_\ell(s) \int_{\mathcal{I}} p_k(\tau) p_j(\tau) d\tau, 
                = \sum_{\ell=1}^\infty d_{j,\ell} p_\ell(s).
        \end{align*}
        As the Legendre expansion of $p_j(s)$ is unique, $d_{j,\ell} = \delta_{j,\ell}$, for $j,\ell = 1, 2, \dots$ . 
\end{proof}

\begin{theorem}\label{thm:res:exist}
   Let $A(t,s)$ be an $N \times N$ matrix-valued function composed of elements from $\mathcal{D}_0^t(\mathcal{I})$ and let $\mathbf{A}=[F^{(k,\ell)}]_{k,\ell=1}^N$ be the related block matrix, with $F^{(k,\ell)}$ the coefficient matrix of $A_{k,\ell}(t,s)$.
   Moreover, consider the matrix-valued function $C(t,s) = \sum_{k \geq 1} A^{\star k}(t,s)$, and let $\mathbf{C}=[H^{(k,\ell)}]_{k,\ell=1}^N$ be the related block matrix, with $H^{(k,\ell)}$ the coefficient matrix of $C_{k,\ell}(t,s)$. 
   Then 
   \begin{equation*}
       (I + \mathbf{C})(I- \mathbf{A}) = I,
   \end{equation*}
   that is, $(I- \mathbf{A})$ is invertible.
\end{theorem}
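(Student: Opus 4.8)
The plan is to close the chain of identities \eqref{eq:reso:mtx:1}--\eqref{eq:reso:mtx:3} with Lemma~\ref{lemma:dirac:ident}. By the correspondence between the matrix $\star$-product and the block-matrix product recorded in \eqref{eq:block:prod}--\eqref{eq:block:mtx:prod}, together with the well-definedness of all the infinite products involved (guaranteed by the off-diagonal geometric decay estimates \eqref{eq:F:bound}--\eqref{eq:FG:bound} and the factorial bound on $\|A^{\star k}\|$ recalled in Section~\ref{sec:starexpression}), the $(k,\ell)$ block of $(I+\mathbf{C})(I-\mathbf{A})$ is the infinite matrix $D^{(k,\ell)}:=\sum_{j=1}^{N}\big(I+H^{(k,j)}\big)\big(I-F^{(j,\ell)}\big)$ appearing in \eqref{eq:reso:mtx:3} (with the convention that $I$ denotes the relevant block of the block identity). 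Spelling $R_\star(A)=I_\star+C$ out entrywise and using $\phi(s)\star\phi(t)^T=I$ gives, for $t\neq s$, $\phi(t)^T D^{(k,\ell)}\phi(s)=\big(R_\star(A)\star(I_\star-A)\big)_{k,\ell}(t,s)$, and by \eqref{eq:res:inv} the right-hand side equals $(I_\star)_{k,\ell}=\delta_{k,\ell}\,1_\star=\delta_{k,\ell}\,\delta(t-s)$; this is exactly \eqref{eq:reso:mtx:3}, read as an identity of distributions in the weak sense used in the proof of Lemma~\ref{lemma:dirac:ident}.

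From here I would split into the diagonal and off-diagonal blocks. For $k=\ell$, \eqref{eq:reso:mtx:3} reads $\phi(t)^T D^{(k,k)}\phi(s)=\delta(t-s)$, so Lemma~\ref{lemma:dirac:ident} applies verbatim and yields $D^{(k,k)}=I$. For $k\neq\ell$ it reads $\phi(t)^T D^{(k,\ell)}\phi(s)=0$; since the coefficient matrix of $1_\star=\delta(t-s)$ is the infinite identity (its $(a,b)$ entry is $\int_{\mathcal{I}}\int_{\mathcal{I}}\delta(\tau-\rho)p_a(\tau)p_b(\rho)\,d\rho\,d\tau=\delta_{a,b}$), one has $\phi(t)^T I\,\phi(s)=\delta(t-s)$, hence $\phi(t)^T\big(D^{(k,\ell)}+I\big)\phi(s)=\delta(t-s)$, and Lemma~\ref{lemma:dirac:ident} gives $D^{(k,\ell)}+I=I$, i.e. $D^{(k,\ell)}=0$. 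Collecting both cases, $D^{(k,\ell)}=\delta_{k,\ell}I$ for all $k,\ell\in\{1,\dots,N\}$, which is precisely $(I+\mathbf{C})(I-\mathbf{A})=I$; the symmetric computation, based on $(I_\star-A)\star R_\star(A)=I_\star$, gives $(I-\mathbf{A})(I+\mathbf{C})=I$ as well, so $I-\mathbf{A}$ is invertible with inverse $I+\mathbf{C}$.

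I expect the only genuinely delicate point to be the justification of the step $\big(R_\star(A)\star(I_\star-A)\big)_{k,\ell}(t,s)=\phi(t)^T D^{(k,\ell)}\phi(s)$: one must check that expanding $R_\star(A)=I_\star+\sum_{k\ge1}A^{\star k}$ into Legendre coefficients blockwise, multiplying the resulting infinite block matrices, and resumming all commute with the uniformly convergent series defining $C$ --- concretely, that $\mathbf{C}=\sum_{k\ge1}\mathbf{A}^{k}$ and that the entrywise sums in $(I+\mathbf{C})(I-\mathbf{A})$ converge absolutely. This is exactly what the estimates \eqref{eq:F:bound}--\eqref{eq:FG:bound} and the factorial decay of $\|A^{\star k}\|$ are there to provide, so once they are invoked the remaining bookkeeping is routine. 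A shortcut sidestepping the distributional manipulations is available: from $C\star A=\sum_{k\ge1}A^{\star(k+1)}=C-A$ one gets $(I_\star+C)\star(I_\star-A)=I_\star-A+C-C\star A=I_\star$ directly in $\mathcal{D}^t_0(\mathcal{I})$ (and its closure), and applying the ring-to-subalgebra correspondence of \eqref{eq:block:mtx:prod} --- which is linear, sends $1_\star$ to the block identity, and sends $\star$-products to block products --- to this identity yields $(I+\mathbf{C})(I-\mathbf{A})=I$ at once.
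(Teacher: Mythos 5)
Your proposal is correct and follows essentially the same route as the paper: translate the $\star$-resolvent identity \eqref{eq:res:inv} into the block identities \eqref{eq:reso:mtx:1}--\eqref{eq:reso:mtx:3} via the orthogonality $\phi(s)\star\phi(t)^T=I$ and the decay bounds \eqref{eq:F:bound:const}--\eqref{eq:FG:bound}, then apply Lemma~\ref{lemma:dirac:ident} to identify the diagonal blocks and conclude $D^{(k,\ell)}=\delta_{k,\ell}I$. Your explicit treatment of the off-diagonal blocks (adding $I$ and reusing the lemma) and your remark on the two-sided identity are small additions to what the paper asserts without detail, but they do not change the argument.
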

\begin{proof}
        Equations~\eqref{eq:reso:mtx:1}, \eqref{eq:reso:mtx:2} and \eqref{eq:reso:mtx:3} show that 
        \begin{align*}
           \phi(t)^T \sum_{j=1}^N \left(I + H^{(k,j)} \right) \left(I - F^{(j,\ell)} \right) \phi(s)  
           = \phi(t)^T D^{(k,\ell)} \phi(s) = \delta(t-s) \delta_{k,\ell},
        \end{align*}
        where the products $ (I + H^{(k,j)} ) (I - F^{(j,\ell)} )$ are well-defined since $(I + H^{(k,j)} )_{m,n}$ is bounded by $1+K_h \sqrt{2m+1}\sqrt{2n+1}$ and $(I - F^{(j,\ell)} )_{m,n}$ by $K_f \rho^{|k-\ell|}$, see \eqref{eq:F:bound:const} and \eqref{eq:FG:bound}.
        Thus, for $k = \ell$, $D^{(k,k)} = \sum_{j=1}^N \left(I + H^{(k,j)} \right) \left(I - F^{(j,\ell)} \right) = I$ by Lemma~\ref{lemma:dirac:ident}. For $k \neq \ell$, $D^{(k,\ell)} = 0$.
\end{proof}

   Consider the solution $U(t,s)$ of \eqref{eq:ode:star} and let us define $\mathbf{T}$ as the block diagonal matrix with blocks all equal to $T$, the coefficient matrix of $\Theta(t-s)$. Using the notation of Theorem~\ref{thm:res:exist} and Expression~\eqref{eq:star:sol}, 
  we can transform the ODE \eqref{eq:ode:star} into the matrix problem:
    \begin{equation}\label{eq:inf:mtx:sol}
         \mathbf{U} = \mathbf{T} (I + \mathbf{C}) (I + \mathbf{B}) = \mathbf{T} (I - \mathbf{A})^{-1} (I + \mathbf{B}),
    \end{equation}
    where the block matrix $\mathbf{B} = [G^{(k,\ell)}]_{k,\ell=1}^N$ is composed of the coefficient matrices $G^{(k,\ell)}$ of the functions $B_{k,\ell}(t,s)$.
   Hence, the solution of \eqref{eq:ode:star} can be expressed by
   \begin{equation*}
       U_{k,\ell}(t,s) = \phi(t)^T Y^{(k, \ell)} \, \phi(s), \quad k,\ell =1,\dots, N,
   \end{equation*}
   with $\mathbf{U} = [Y^{(k,\ell)}]_{k,\ell=1}^N$.
  
 To conclude the presentation, we need to discuss the convergence of the expansion \eqref{eq:f:exp}. Indeed, since $f$ is discontinuous for $t=s$, the expansion does not converge to $f(t,t)$ and, moreover, it converges only linearly for $t\neq s$; see, e.g., \cite{LebSil72,TreApp13}.
 \begin{lemma}\label{eq:conv:disc}
        Consider $f(t,s) \in \mathcal{A}_\Theta$ and the related expansion in orthonormal shifted Legendre polynomials \eqref{eq:f:exp}. Then,
        \begin{equation*}
        \lim_{N \rightarrow \infty} 	\sum_{k=0}^N \sum_{\ell=0}^N f_{k,\ell} \, p_k(t) p_\ell(s) = \left\{ \begin{array}{lc}
            f(t,s), & t\neq s  \\
            f(t,t)/2, & t =s
        \end{array} \right. , \quad t,s \in (0,1).
                \end{equation*}
 \end{lemma}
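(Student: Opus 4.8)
The plan is to reduce the two–variable statement to the classical one–variable convergence theory for Legendre expansions. Let $K_N(x,y):=\sum_{k=0}^N p_k(x)p_k(y)$ be the Christoffel–Darboux kernel of the shifted Legendre polynomials on $\mathcal{I}$, and write $\sigma_N$ for the associated partial–sum operator, $(\sigma_N g)(x)=\int_{\mathcal{I}}K_N(x,y)g(y)\,\mathrm{d}y$. Since $f=\tilde f\,\Theta(\cdot-\cdot)\in L^\infty(\mathcal{I}^2)$, a trivial Fubini argument gives
\begin{equation*}
S_N(t,s):=\sum_{k=0}^N\sum_{\ell=0}^N f_{k,\ell}\,p_k(t)p_\ell(s)=\int_{\mathcal{I}}\int_{\mathcal{I}}K_N(t,\tau)\,K_N(s,\rho)\,f(\tau,\rho)\,\mathrm{d}\rho\,\mathrm{d}\tau .
\end{equation*}
First I would perform the $\rho$–integration: for fixed $\tau$ the map $\rho\mapsto f(\tau,\rho)=\tilde f(\tau,\rho)\Theta(\tau-\rho)$ is piecewise analytic on $\mathcal{I}$ with a single jump, at $\rho=\tau$, of size $-\tilde f(\tau,\tau)$. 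Hence $\Phi_N(\tau):=\int_{\mathcal{I}}K_N(s,\rho)f(\tau,\rho)\,\mathrm{d}\rho=(\sigma_N g_\tau)(s)$ with $g_\tau(\rho):=\tilde f(\tau,\rho)\Theta(\tau-\rho)$, and the one–dimensional Dirichlet–Jordan theorem for Legendre series (see \cite{LebSil72,TreApp13}) yields $\Phi_N(\tau)\to f(\tau,s)$ for $\tau\neq s$, $\Phi_N(s)\to\tilde f(s,s)/2$, together with the Gibbs–type bound $\sup_{N,\tau}|\Phi_N(\tau)|\le C$, which follows from the uniformly bounded variation (in $\rho$) of the family $\{g_\tau\}_{\tau\in\mathcal{I}}$.

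Next, set $\Phi_\infty(\tau):=f(\tau,s)=\tilde f(\tau,s)\Theta(\tau-s)$, so that
\begin{equation*}
S_N(t,s)=(\sigma_N\Phi_\infty)(t)+\int_{\mathcal{I}}K_N(t,\tau)\,\big(\Phi_N(\tau)-\Phi_\infty(\tau)\big)\,\mathrm{d}\tau .
\end{equation*}
The function $\Phi_\infty$ is again piecewise analytic on $\mathcal{I}$ with a single jump, at $\tau=s$, of size $-\tilde f(s,s)$, so Dirichlet–Jordan gives $(\sigma_N\Phi_\infty)(t)\to f(t,s)$ for $t\neq s$ and $(\sigma_N\Phi_\infty)(s)\to\tilde f(s,s)/2=f(s,s)/2$; this is precisely the limit claimed in the lemma. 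It therefore remains to prove that $E_N:=\int_{\mathcal{I}}K_N(t,\tau)\big(\Phi_N(\tau)-\Phi_\infty(\tau)\big)\,\mathrm{d}\tau\to0$ for $t,s\in(0,1)$.

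For the error term I would use that $\Phi_N-\Phi_\infty$ is uniformly bounded (Gibbs bound above) and that, for each fixed $\tau\neq s$, it decays at rate $O(1/N)$: at $\rho=s$ the function $g_\tau$ is analytic, its only singularity sitting at $\rho=\tau\neq s$, and Legendre partial sums of a piecewise–analytic function converge at rate $O(1/N)$ at points separated from the jump, uniformly as $\tau$ ranges over a compact subset of $\mathcal{I}$ avoiding $s$. Fixing $t\neq s$, split $\mathcal{I}=I_s\cup I_t\cup R$ into small neighbourhoods $I_s$ of $s$, $I_t$ of $t$, and the remainder $R$: on $I_s$ the kernel $K_N(t,\cdot)$ is bounded (as $t\neq s$) while $\Phi_N-\Phi_\infty$ is only bounded, giving a contribution $O(|I_s|)$; on $R$ both $K_N(t,\cdot)$ is bounded and $\Phi_N-\Phi_\infty=O(1/N)$ uniformly, giving $O(1/N)$; on $I_t$ one uses $\int_{I_t}|K_N(t,\tau)|\,\mathrm{d}\tau=O(\sqrt N)$ (the Legendre Lebesgue constant) against $\Phi_N-\Phi_\infty=O(1/N)$ there (again because $t\neq s$), giving $O(1/\sqrt N)$. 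Letting $N\to\infty$ and then shrinking $I_s$ gives $E_N\to0$. The main obstacle is exactly this estimate: it requires invoking, with the correct uniformity, three quantitative facts of one–dimensional Legendre theory — the Gibbs–type uniform bound for $\sigma_N$ on families of uniformly bounded variation, the $O(1/N)$ local rate away from a jump uniform over the parameter $\tau$, and the $O(\sqrt N)$ growth of the Legendre Lebesgue constant — and checking that they balance as above. An essentially equivalent way to organise the technicalities is to first split, using analyticity of $\tilde f$, as $\tilde f(t,s)=\tilde f(t,t)+(s-t)\tilde g(t,s)$ with $\tilde g$ analytic, so that $f=w(t)\Theta(t-s)+(s-t)\tilde g(t,s)\Theta(t-s)$ with $w(t):=\tilde f(t,t)$; the second summand is globally continuous (it vanishes on the diagonal) and piecewise analytic, hence its double Legendre series converges to it everywhere by the two–variable (Hardy–Krause) Dirichlet–Jordan theorem, and one is left with $w(t)\Theta(t-s)\in\mathcal{A}_\Theta^t(\mathcal{I})$, for which the banded structure of the coefficient matrices $B^{(k)}$ recalled in Section~\ref{sec:starsol}, together with the geometric decay \eqref{eq:coeffDecayRate} of the coefficients of $w$, reduces the problem to the one–variable jump theorem applied to $t\mapsto w(t)\Theta(t-s)$.
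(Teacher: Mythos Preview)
The paper's own proof is a single sentence: it cites Theorem~1 and Remark~1 in Section~4.7 of \cite{LebSil72} and says the lemma is a direct consequence. You are therefore doing something much more ambitious than the paper: rather than invoking a ready-made two-variable equiconvergence/Dirichlet--Jordan result for Legendre series, you try to manufacture such a result from one-dimensional theory via the iterated-kernel decomposition $S_N=(\sigma_N\Phi_\infty)(t)+E_N$. For $t\neq s$ your outline is reasonable and the three quantitative ingredients you name (uniform Gibbs bound, $O(1/N)$ local rate away from the jump with uniformity in the parameter $\tau$, $O(\sqrt N)$ Legendre Lebesgue constant) are the right ones; assuming they can be quoted with the stated uniformity, the splitting $I_s\cup I_t\cup R$ does force $E_N\to0$.

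The genuine gap is the diagonal case $t=s$. You state that it remains to show $E_N\to0$ for all $t,s\in(0,1)$, but the splitting argument is carried out only for $t\neq s$, and it does not extend: when $t=s$ the neighbourhoods $I_t$ and $I_s$ coincide, so near $\tau=s$ you simultaneously lose the boundedness of $K_N(s,\cdot)$ (it peaks there) and the $O(1/N)$ decay of $\Phi_N-\Phi_\infty$ (since $\tau\approx s$ puts the evaluation point $s$ at the jump of $g_\tau$, giving only the Gibbs bound). In fact $\Phi_N(s)-\Phi_\infty(s)\to-\tfrac12\tilde f(s,s)\neq0$, so one is integrating a kernel that concentrates at $\tau=s$ against an error that does not vanish there; showing the oscillations cancel requires an argument you have not supplied. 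Your alternative route---splitting off $w(t)\Theta(t-s)$ and invoking a two-variable Hardy--Krause Dirichlet--Jordan theorem for the continuous remainder---is closer in spirit to what the paper quotes, but it is only sketched: the remainder $(s-t)\tilde g(t,s)\Theta(t-s)$ has a distributional mixed partial on the diagonal, so its Hardy--Krause variation must be checked, and the claimed reduction of $w(t)\Theta(t-s)$ to a one-variable jump theorem via the banded structure of $B^{(k)}$ is asserted rather than argued. In short: your strategy is sound for $t\neq s$, but the $t=s$ conclusion is not yet established; the paper sidesteps all of this by citing the relevant theorem in \cite{LebSil72} directly.
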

 \begin{proof}
        The proof is direct consequence of Theorem~1 and Remark~1 in Section~4.7 of \cite{LebSil72}.
 \end{proof}
 
  However, for the fixed $s = 0$, the univariate function $f(t,0) = \tilde{f}(t,0)\Theta(t-0) = \tilde{f}(t,0)$ is analytic over $[0,1]$. Therefore,
  defining $a_k = \sum_{\ell=0}^\infty (f_{k,\ell}\, p_\ell(0))$, we get the Legendre expansion
  \begin{equation*}
      	f(t,0) = \sum_{k=0}^\infty  p_k(t) \sum_{\ell=0}^\infty f_{k,\ell} \, p_\ell(0) = \sum_{k=0}^\infty a_k p_k(t), \quad t \in [0,1].
  \end{equation*}
  Therefore, the truncated series $\sum_{k=0}^M a_k p_k(t)$ converges geometrically to $f(t,0)$.
  As a consequence, in a numerical setting, we can approximate the function $f(t,0)$ by using $F_M$, the principal leading submatrix of $F$, obtaining the approximation
  \begin{equation*}
      f(t,0) \approx \phi_M(t)^T F_M \, \phi_M(0),
  \end{equation*}
  with $\phi_M(t)$ the first $M$ elements of $\phi(t)$.
  In this case, we expect to reach a good enough accuracy for a (relatively) small $M$. Note that for $s>0$ this is not possible, as we expect the emergence of the Gibbs phenomenon; see, e.g., \cite{TreApp13}.

  By considering the principal leading submatrix of each of the blocks in formula \eqref{eq:inf:mtx:sol}, for $s=0$, we get the following approximated solution to \eqref{eq:ode:intro} 
   \begin{equation}\label{eq:ode:trunc}
       U_0(t) \approx (I_N \otimes \phi_M(t)^T T_M) (I_M- \mathbf{A}_M)^{-1} (I_M + \mathbf{B}_M) (I_N \otimes \phi_M(0));
    \end{equation}
see also \cite{PozVan22proc_mtx}.
  The numerical approach for the solution of a non-autonomous linear ODE system derived from \eqref{eq:ode:trunc} can be found in \cite{PozVan22proc_mtx,PozVan22proc_scalar,PozVan22proc_PANM} where several numerical examples show its efficacy. 
  
\section{Conclusion}\label{sec:conclusion}
In this paper, we have presented a new expression for the solution of a (non-homogeneous, non-autonomous) system of linear ODEs using the so-called $\star$-product. The $\star$-product, the usual sum, and a specific set of distributions constitute a ring $\mathcal{S}$. We have also shown that a certain subring of $\mathcal{S}$ corresponds to a subalgebra of infinite matrices. Thanks to this correspondence, we have expressed the solution of the linear ODE system in the infinite matrix algebra. Such a solution is obtained by inverting a determined infinite matrix.
The connection between the $\star$-product ring and the matrix subalgebra helped us to show that such an inverse always exists. By truncating the infinite matrices, it is possible to derive numerical methods for the solution of ODEs. This paper complements the results we are developing in the truncated case by placing it in the general framework of the infinite matrix algebra.

\section*{Acknowledgements}
This  work  was  supported  by  Charles  University  Research programs UNCE/SCI/023 and PRIMUS/21/SCI/009 and by the Magica project ANR-20-CE29-0007 funded by the French National Research Agency.

\bibliographystyle{tfnlm}
\bibliography{biblio}

\end{document}